\newtheorem{theorem}{Theorem}
\theoremstyle{remark}
\title{\bf An Integral Identity Relating Diamond and Square Domains}
\author{Agust\'in Dom\'inguez-Cruz}
\date{January 2026}
\begin{document}
\maketitle

\begin{abstract}
We establish an integral identity for functions on $\mathbb{R}^2$ that are invariant under discrete diagonal translations. The identity shows that integration over the diamond-shaped region $|x|+|y|\le L$ is exactly one half of the integral over the square domain $[-L,L]^2$, allowing diamond-domain integrals to be reduced to easier rectangular integrations.
\end{abstract}

\section*{Main result}

Let
\[
S=[-L,L]^2, \qquad 
D=\{(x,y)\in\mathbb{R}^2:\ |x|+|y|\le L\}, \qquad L>0 .
\]

\begin{theorem}\label{thm:diamond}
Let $f:\mathbb{R}^2\to\mathbb{C}$ satisfy the translation invariances
\[
f(x,y)
=
f(x+L,y+L)
=
f(x+L,y-L)
=
f(x-L,y-L)
=
f(x-L,y+L),
\]
for all $(x,y)\in\mathbb{R}^2$.

Then
\begin{equation}\label{eq:main}
\iint_D f(x,y)\, d x\, d y
=
\frac12
\iint_S f(x,y)\, d x\, d y .
\end{equation}
\end{theorem}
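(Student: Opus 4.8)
The plan is to use the fact that the stated invariances make $f$ periodic under every diagonal translation in the lattice $\Lambda=\mathbb{Z}(L,L)+\mathbb{Z}(L,-L)$, whose fundamental parallelogram has area $2L^2$ — exactly $\operatorname{area}(D)$, while $\operatorname{area}(S)=4L^2$. Morally, $D$ is a fundamental domain for $\Lambda$ and $S$ covers two copies of it, which is why the factor $\tfrac12$ appears; I will make this precise by an explicit cut-and-translate argument rather than invoking any general theory. Throughout I assume, as is implicit in the statement, that $f$ is measurable and locally integrable, so that every integral written below is finite; no further regularity is used.

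First I would dissect the square. Since $|x|+|y|\le L$ forces $|x|,|y|\le L$, we have $D\subseteq S$, and $S\setminus D$ breaks up — according to the signs of $x$ and $y$ — into the four closed right-isosceles ``corner'' triangles
\[
T_{1}=S\cap\{x+y\ge L\},\qquad T_{2}=S\cap\{x-y\ge L\},\qquad T_{3}=S\cap\{x+y\le -L\},\qquad T_{4}=S\cap\{x-y\le -L\},
\]
with vertices $(L,L),(L,0),(0,L)$; then $(L,-L),(L,0),(0,-L)$; then $(-L,-L),(-L,0),(0,-L)$; and $(-L,L),(-L,0),(0,L)$, respectively. Any two of them meet in at most one point, so
\[
\iint_S f=\iint_D f+\sum_{i=1}^4\iint_{T_i}f .
\]

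The heart of the matter is the second step: translating the four corner triangles back into $D$. I would translate $T_1$ by $-(L,L)$, $T_2$ by $-(L,-L)$, $T_3$ by $+(L,L)$, and $T_4$ by $+(L,-L)$ — each of these being one of the four vectors under which $f$ is assumed invariant — and then check, by following the vertices, that the four translated triangles are precisely the four ``quadrant quarters''
\[
D\cap\{x\le0,\,y\le0\},\qquad D\cap\{x\le0,\,y\ge0\},\qquad D\cap\{x\ge0,\,y\ge0\},\qquad D\cap\{x\ge0,\,y\le0\},
\]
which tile $D$ with overlaps of measure zero. For each $i$, the substitution that carries $T_i$ onto its image, combined with the invariance of $f$ under that translation and the translation invariance of Lebesgue measure, gives $\iint_{T_i}f=\iint_{(\text{image of }T_i)}f$; summing over $i$ yields $\sum_{i=1}^4\iint_{T_i}f=\iint_D f$. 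Substituting this into the displayed identity for $\iint_S f$ gives $\iint_S f=2\iint_D f$, which is \eqref{eq:main}.

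I expect the only real obstacle to be the bookkeeping in this second step: one must pair each corner triangle with the correct diagonal translation vector and verify that the translate lands exactly on the intended quadrant quarter of $D$ — with no gap and no overlap of positive measure — and, on the side of $S\setminus D$, that the four corner triangles genuinely exhaust it. All of this is elementary geometry of the lines $x\pm y=\pm L$ and the nine points $(\pm L,\pm L)$, $(\pm L,0)$, $(0,\pm L)$, but it is where the argument actually has content; everything else is just the periodicity of $f$ together with the invariance of Lebesgue measure under translation.
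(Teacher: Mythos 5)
Your proof is correct and is essentially the paper's own argument run in the opposite direction: the paper translates the four quadrant pieces of $D$ outward onto the corner triangles of $S\setminus D$, while you translate the corner triangles inward onto the quadrant pieces of $D$, using the same four invariance vectors and the same measure-zero-overlap decomposition. Your vertex bookkeeping for the pairing of triangles with translation vectors checks out, so nothing further is needed.
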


\begin{proof}
Decompose $D$ (up to boundaries of measure zero) into the four congruent right triangles
\[
C_1=\{x\ge0,y\ge0\},\;
C_2=\{x\le0,y\ge0\},\;
C_3=\{x\le0,y\le0\},\;
C_4=\{x\ge0,y\le0\},
\]
intersected with $D$, so that $\int_D f=\sum_{i=1}^4\int_{C_i} f$.

Define the translations
\[
T_1:\ (x,y)\mapsto (x-L,\,y-L),\qquad
T_2:\ (x,y)\mapsto (x+L,\,y-L),
\]
\[
T_3:\ (x,y)\mapsto (x+L,\,y+L),\qquad
T_4:\ (x,y)\mapsto (x-L,\,y+L).
\]
Each $T_i$ has unit Jacobian and preserves $f$ by hypothesis, i.e.\ $f\circ T_i = f$. Moreover, the four images $T_i(C_i)$ tile $S\setminus D$. Therefore
\[
\int_{S\setminus D} f
=\sum_{i=1}^4 \int_{T_i(C_i)} f
=\sum_{i=1}^4 \int_{C_i} f
=\int_D f,
\]
which implies $\int_S f=\int_{S\setminus D} f+\int_D f=2\int_D f$ and proves \eqref{eq:main}.
\end{proof}
For a visual illustration of this proof applied to a specific example, see Fig.~\ref{fig:integrand_domains}.

\remark{An application of this theorem was developed by the author in recent work in mathematical optics \cite{DominguezCruz:2025:BGWigner}.}

\section*{Example}

For any real numbers $A,B,C,D$, consider the integral
\begin{equation}\label{example}
I=\int_{-\pi}^{\pi}\int_{-\pi}^{\pi} 
\exp\!\big[
A\cos(u+v)+B\sin(u+v)+C\cos(u-v)+D\sin(u-v)
\big]\, d u\, d v .
\end{equation}

Write $A\cos t+B\sin t = R_1\cos(t-\delta_1)$ and 
$C\cos t+D\sin t = R_2\cos(t-\delta_2)$, where 
$R_1=\sqrt{A^2+B^2}$, $R_2=\sqrt{C^2+D^2}$, and 
$\delta_1=\arg(A+\mathrm{i}B)$, $\delta_2=\arg(C+\mathrm{i}D)$. Then
\[
I=\int_{-\pi}^{\pi}\int_{-\pi}^{\pi}
e^{R_1\cos(u+v-\delta_1)}\,
e^{R_2\cos(u-v-\delta_2)}\, d u\, d v .
\]

Introduce the change of variables $x=(u+v)/2$, $y=(u-v)/2$, for which
$u=x+y$, $v=x-y$, $d u\, d v = 2\, d x\, d y$, and $[-\pi,\pi]^2$ maps onto
$|x|+|y|\le\pi$. Hence
\begin{equation}\label{eq:I_diamond}
I
=
2\iint_{|x|+|y|\le\pi}
e^{R_1\cos(2x-\delta_1)}\,
e^{R_2\cos(2y-\delta_2)}\, d x\, d y .
\end{equation}

Since the integrand satisfies the translation invariance of
Theorem~\ref{thm:diamond} with $L=\pi$, the integral simplifies as
\[
I=
\int_{-\pi}^{\pi} e^{R_1\cos(2x-\delta_1)}\, d x
\int_{-\pi}^{\pi} e^{R_2\cos(2y-\delta_2)}\, d y.
\]
Because the integrands are $2\pi$--periodic and $\delta_{1,2}$ are defined
modulo $2\pi$, the shifts may be removed by translation. Using
$\int_{-\pi}^{\pi} e^{z\cos\theta}\, d\theta = 2\pi I_0(z)$, we obtain
\begin{equation}\label{eq:IS}
I
=
4\pi^2\, I_0\big(\sqrt{A^2+B^2}\big)\,
I_0\big(\sqrt{C^2+D^2}\big).
\end{equation}

Figure~\ref{fig:integrand_domains} provides a visual illustration of Theorem~\ref{thm:diamond} applied to this example. The three panels show the integrand $g(u,v)$ in \eqref{example} restricted to the domains $S$, $S\setminus D$, and $D$. The dashed lines indicate the decomposition of $D$ into four congruent right triangles, each of which maps onto a corresponding corner of $S\setminus D$ under the translations $T_i$. Intuitively, the triangles in $D$ may be viewed as arrows (with tip at $(0,0)$) indicating the direction in which each $C_i$ is shifted.

\begin{figure}[!t]
    \centering
    \includegraphics[width=0.92\linewidth]{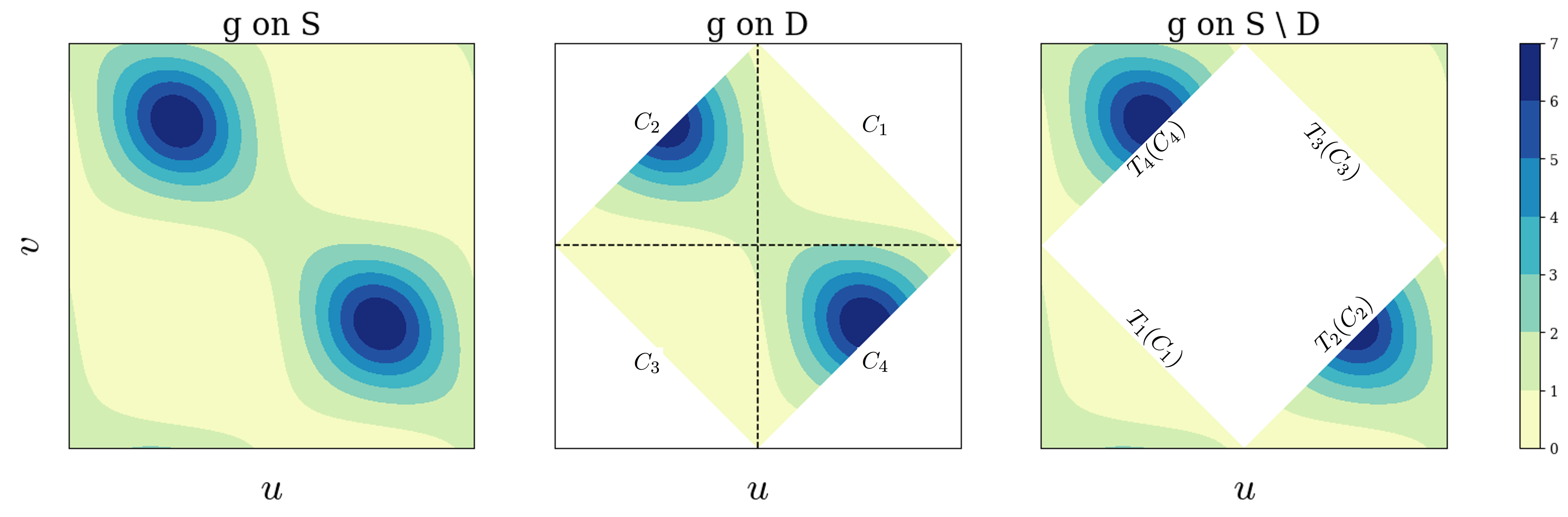}
    \caption{Visualization of the integrand in Eq.~\eqref{example} on the
    domains $S=[-\pi,\pi]^2$, $D=\{(u,v):|u|+|v|\le\pi\}$, and $S\setminus D$,
    for parameters $A=1$, $B=0.5$, $C=-0.8$, and $D=0.2$.}
    \label{fig:integrand_domains}
\end{figure}


\end{document}